\newcommand{\deff}[1]{\textbf{\emph{\sharp1}}}
\newcommand{\func}[3]{\sharp1 \colon \sharp2 \to \sharp3}
\newcommand{\bb}[1]{\mathbb{\sharp1}}
\newcommand{\lie}[1]{\mathfrak{\sharp1}}
\newcommand{\iprod}[2]{\langle \sharp1, \sharp2 \rangle}
\newcommand{\ddell}[1]{\frac{\partial}{\partial \sharp1}}
\numberwithin{equation}{section}
\theoremstyle{plain}
\newtheorem{theorem}{Theorem}[section]
\newtheorem{claim}[theorem]{Claim}
\newtheorem{lemma}[theorem]{Lemma}
\newtheorem{proposition}[theorem]{Proposition}
\theoremstyle{definition}
\newtheorem{remark}[theorem]{Remark}
\newtheorem{definition}[theorem]{Definition}
\renewcommand	{\k}		{k}
\title{Equivariant formality in $K$-theory}
\author{Chi-Kwong Fok}
\date{October 3, 2018}
\begin{document}
\maketitle
\begin{abstract}
	In this note we present an analogue of equivariant formality in $K$-theory and show that it is equivalent to equivariant formality \emph{\`a la} Goresky-Kottwitz-MacPherson. We also apply this analogue to give alternative proofs of equivariant formality of conjugation action on compact Lie groups, left translation action on generalized flag manifolds, and compact Lie group actions with maximal rank isotropy subgroups. 
\end{abstract}

\emph{Mathematics Subject Classification}: 19L47; 55N15; 55N91

\section{Introduction} Equivariant formality, first defined in \cite{GKM}, is a special property of group actions on topological spaces which allows for easy computation of their equivariant cohomology. 
A $G$-action on a space $X$ is said to be equivariantly formal if the Leray-Serre spectral sequence for the rational cohomology of the fiber bundle $X\hookrightarrow X\times_G EG\to BG$ collapses on the $E_2$-page. The latter is also equivalent to $H_G^*(X; \mathbb{Q})\cong H_G^*(\text{pt}; \mathbb{Q})\otimes H^*(X; \mathbb{Q})$ as $H_G^*(\text{pt}; \mathbb{Q})$-modules. There are various examples of interest which are known to be equivariantly formal, e.g. Hamiltonian group actions on compact symplectic manifolds and linear algebraic torus actions on smooth complex projective varieties (cf. \cite[Section 1.2 and Theorem 14.1]{GKM}).

Though equivariant formality was first defined in terms of equivariant cohomology, in some situations working with analogous notions phrased in terms of other equivariant cohomology theories may come in handy. The notion of equivariant formality in $K$-theory was introduced and explored by Harada and Landweber in \cite{HL}, where they instead used the term `weak equivariant formality' and exploited this notion to show equivariant formality of Hamiltonian actions on compact symplectic manifolds. 
\begin{definition}[{cf. \cite[Def. 4.1]{HL}}]
\label{def:weakKEF} 
Let $\k$ be a commutative ring, $G$ a compact Lie group and $X$ a $G$-space. We use $K^*(X)$ (resp. $K_G^*(X)$) to denote the $\mathbb{Z}_2$-graded\footnote{Recall that one can use $\mathbb{Z}_2$-grading in defining complex $K$-theory thanks to Bott periodicity.} complex (equivariant) $K$-theory of $X$, and ${K^*(X; k)}$ (resp. ${K_G^*(X; k)}$) to denote $K^*(X)\otimes k$ (resp. $K^*_G(X)\otimes k$). We denote the complex representation ring of $G$ by $R(G)$, and write ${R(G;\k)}:= R(G) \otimes \k$, and ${I(G;\k)}=I(G)\otimes k$, where $I(G)$ is the augmentation ideal of $R(G)$. Let 
\[f_G: K^*_G(X)\to K^*(X)\]
be the forgetful map. A $G$-action on a space $X$ is $k$-weakly equivariantly formal if $f_G$ induces an isomorphism
\[
	K_G^*(X; k)\otimes_{R(G; k)}k\to K^*(X; k)
\]
We simply say the action is weakly equivariantly formal
in the case $\k = \mathbb{Z}$.
\end{definition}

Harada and Landweber settled for weakly equivariant formality as in Definition \ref{def:weakKEF} as the $K$-theoretic analogue of equivariant formality, instead of the seemingly obvious candidate $K_G^*(X)\cong K_G^*(\text{pt})\otimes K^*(X)$, citing the lack of the Leray-Serre spectral sequence for Atiyah-Segal's equivariant $K$-theory. The term `weak' is in reference to the condition in Definition \ref{def:weakKEF} being weaker than $K_G^*(X)\cong K_G^*(\text{pt})\otimes K^*(X)$ because of the possible presence of torsion. We would like to define the following version of $K$-theoretic equivariant formality in exact analogy with another cohomological equivariant formality condition that the forgetful map $H_G^*(X)\to H^*(X)$ be onto. 
\begin{definition}\label{rkef}
We say that $X$ is a \emph{rational $K$-theoretic equivariantly formal} (\emph{RKEF} for short) $G$-space if the forgetful map
\[f_G\otimes \text{Id}_\mathbb{Q}: K_G^*(X; \mathbb{Q})\to K^*(X; \mathbb{Q})\]
is onto.
\end{definition}
Recall that $K^0(X)$ (resp. $K^{-1}(X)$) is the Grothendieck group of the commutative monoid of isomorphism classes of (resp. reduced) complex vector bundles over $X$ (resp. $\Sigma X$) under Whitney sum, and $K_G^*(X)$ can be similarly defined using equivariant vector bundles. The above condition then admits a natural interpretation in terms of vector bundles: for every vector bundle $V$ over $X$ and its suspension $\Sigma X$, there are natural numbers $p, q$ such that $V^{\oplus p} \oplus \underline{\mathbb{C}}^q$ admits an equivariant $G$-structure. 

In this note, we will prove the following theorem, which asserts the equivalence of RKEF and equivariant formality in the classical sense. 

\begin{theorem}\label{rkef&rcef} Let $G$ be a compact and connected Lie group which acts on a finite CW-complex $X$. The following are equivalent. 
\begin{enumerate}
	\item $X$ is a RKEF $G$-space. 
	\item $X$ is an equivariantly formal $G$-space.
	\item $X$ is a $\mathbb{Q}$-weakly equivariantly formal $G$-space.
\end{enumerate}
\end{theorem}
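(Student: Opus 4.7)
The strategy is to establish the cycle $(3)\Rightarrow(1)\Rightarrow(2)\Rightarrow(3)$. The first implication is immediate: $\mathbb{Q}$-weak equivariant formality asserts that $f_G\otimes\mathrm{Id}_{\mathbb{Q}}$ factors through an isomorphism $K_G^*(X;\mathbb{Q})\otimes_{R(G;\mathbb{Q})}\mathbb{Q}\xrightarrow{\sim} K^*(X;\mathbb{Q})$, so in particular $f_G\otimes\mathrm{Id}_{\mathbb{Q}}$ is surjective.

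For $(1)\Rightarrow(2)$, the key observation is that the forgetful map factors through the Borel construction as $K_G^*(X;\mathbb{Q})\to K^*(X_G;\mathbb{Q})\xrightarrow{i^*} K^*(X;\mathbb{Q})$, where $i\colon X\hookrightarrow X_G:=X\times_G EG$ is the fiber inclusion, and the cohomological forgetful map is the corresponding $i^*\colon H^*(X_G;\mathbb{Q})\to H^*(X;\mathbb{Q})$. Given $y\in H^n(X;\mathbb{Q})$, I would let $z:=\mathrm{ch}^{-1}(y)\in K^*(X;\mathbb{Q})$ via the Chern character isomorphism on the finite CW complex $X$, lift by hypothesis $(1)$ to $\tilde z\in K_G^*(X;\mathbb{Q})$, and consider its image $w\in K^*(X_G;\mathbb{Q})$. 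Naturality of the Chern character forces $i^*(\mathrm{ch}(w))=\mathrm{ch}(z)=y$; decomposing $\mathrm{ch}(w)$ into homogeneous components in $\prod_m H^m(X_G;\mathbb{Q})$ and using that $i^*$ preserves cohomological degree then produces a class $c_n\in H_G^n(X;\mathbb{Q})$ with $i^*(c_n)=y$. Thus $i^*\colon H_G^*(X;\mathbb{Q})\to H^*(X;\mathbb{Q})$ is surjective, which is the standard characterization of equivariant formality.

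For $(2)\Rightarrow(3)$, I would invoke the Atiyah--Segal completion theorem identifying $K_G^*(X;\mathbb{Q})^{\wedge}_{I(G;\mathbb{Q})}$ with $K^*(X_G;\mathbb{Q})$. Equivariant formality makes the rational Serre spectral sequence of $X\hookrightarrow X_G\to BG$ collapse, giving $H^*(X_G;\mathbb{Q})\cong H^*(BG;\mathbb{Q})\otimes H^*(X;\mathbb{Q})$ as $H^*(BG;\mathbb{Q})$-modules; combining this with the rational collapse of the Atiyah--Hirzebruch spectral sequence for $X_G$ yields $K^*(X_G;\mathbb{Q})\cong R(G;\mathbb{Q})^{\wedge}_{I(G;\mathbb{Q})}\otimes_{\mathbb{Q}} K^*(X;\mathbb{Q})$ as modules over the first factor. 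Reducing modulo the augmentation ideal and invoking the standard fact $M^{\wedge}_I/IM^{\wedge}_I\cong M/IM$ for finitely generated modules over Noetherian rings recovers $K_G^*(X;\mathbb{Q})\otimes_{R(G;\mathbb{Q})}\mathbb{Q}\cong K^*(X;\mathbb{Q})$, i.e.\ $\mathbb{Q}$-weak equivariant formality.

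The main obstacle is the careful bookkeeping of module structures across the various completions and Chern characters, especially since $X_G$ is typically not a finite CW complex and the rational Chern character must be interpreted as taking values in a degree-wise completed cohomology. Individual $K$-classes still yield well-defined homogeneous-component cohomology classes, which suffices for the degree-wise argument in $(1)\Rightarrow(2)$; and the passage between $K_G^*(X;\mathbb{Q})$ and its $I(G;\mathbb{Q})$-adic completion in $(2)\Rightarrow(3)$ is justified by finite generation of $K_G^*(X;\mathbb{Q})$ over the Noetherian ring $R(G;\mathbb{Q})$, which is guaranteed by the finite CW hypothesis on $X$.
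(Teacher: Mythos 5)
Your steps $(3)\Rightarrow(1)$ and $(1)\Rightarrow(2)$ are essentially sound and run parallel to the paper: the paper also deduces surjectivity of the cohomological forgetful map from surjectivity of the $K$-theoretic one via the (completed) equivariant Chern character, and your device of extracting the degree-$n$ homogeneous component $c_n\in H^n_G(X;\mathbb{Q})$ of $\mathrm{ch}(w)$ is a clean way to land back in the \emph{uncompleted} equivariant cohomology (the paper instead passes through $H_G^{**}(X)\cong H_G^*(X)\otimes_{H_G^*(\mathrm{pt})}H_G^{**}(\mathrm{pt})$). The divergence, and the gap, is in $(2)\Rightarrow(3)$, which is where the real content of the theorem sits.

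There, the sentence asserting that the collapse of the rational Atiyah--Hirzebruch spectral sequence for $X_G$ together with $H^*(X_G;\mathbb{Q})\cong H^*(BG;\mathbb{Q})\otimes H^*(X;\mathbb{Q})$ yields $K^*(X_G;\mathbb{Q})\cong R(G;\mathbb{Q})^{\wedge}_{I}\otimes_{\mathbb{Q}}K^*(X;\mathbb{Q})$ \emph{as a module over the first factor} is precisely the hard step, and it is asserted rather than proved. Three things are missing. First, $X_G$ is not a finite complex: the collapse, the identification $K^*(X_G;\mathbb{Q})\cong\varprojlim_n K^*(X\times_G EG_n;\mathbb{Q})$, and the compatibility of $I$-adic completion with rationalization all require the Atiyah--Segal inverse-limit analysis; the paper extracts only \emph{injectivity} of the resulting Chern character on $K^*(X_G;\mathbb{Q})$, whereas your argument needs bijectivity (or a substitute) to produce anything. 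Second, degeneration of a spectral sequence gives an isomorphism of associated graded objects only; the claimed module isomorphism is a Leray--Hirsch statement requiring actual classes $y_1,\dots,y_n\in K^*(X_G;\mathbb{Q})$ restricting to a basis of $K^*(X;\mathbb{Q})$, and producing them from hypothesis $(2)$ is itself a nontrivial surjectivity assertion (essentially $(2)\Rightarrow(1)$ after completion) that your cycle has not yet established at this point. Third, Definition 1.1 requires the isomorphism $K_G^*(X;\mathbb{Q})\otimes_{R(G;\mathbb{Q})}\mathbb{Q}\to K^*(X;\mathbb{Q})$ to be the one induced by $f_G$, not merely an abstract one, so the basis must be tracked through the reduction modulo $I$. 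None of these is obviously fatal, but as written the argument presupposes the kind of $K$-theoretic Leray--Hirsch/K\"unneth decomposition of $X_G$ whose unavailability is the stated reason Harada--Landweber introduced the ``weak'' formulation in the first place. The paper avoids this entirely: it reduces to a maximal torus $T$ via Weyl-group averaging, proves $(2)\Rightarrow(1)$ by a dimension count combining Segal's localization theorem at $X^T$, Nakayama's lemma, and the identity $\dim H^*(X)=\dim H^*(X^T)$, and then gets $(1)\Rightarrow(3)$ from the separate computation that $\ker(f_G\otimes\mathrm{Id}_{\mathbb{Q}})=I(G;\mathbb{Q})\cdot K_G^*(X;\mathbb{Q})$ under equivariant formality. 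You would need either to adopt that route or to supply the inverse-limit and Leray--Hirsch arguments in full to close $(2)\Rightarrow(3)$.
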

We will also give alternative proofs of equivariant formality of certain group actions which were proved in cohomological terms. These are conjugation action on compact Lie groups, left translation action on generalized flag manifolds, and compact Lie group actions with maximal rank isotropy subgroups. 

We note that there is an analogue of Theorem \ref{rkef&rcef} in the algebro-geometric setting (\cite[Theorem 1.1]{Gr}): it is also an assertion of surjectivity, but of the forgetful map from the rational Grothendieck group of $G$-equivariant coherent sheaves on a $G$-scheme $X$ to the corresponding Grothendieck group for ordinary coherent sheaves, where $G$ is a connected reductive algebraic group. Theorem \ref{rkef&rcef} confirms the expectation (\cite[Introduction]{Gr}) that the $K$-theoretic forgetful map is onto for equivariantly formal topological spaces. 

In the remainder of this note, the coefficient ring of any cohomology theory is always $\mathbb{Q}$.

\emph{Acknowledgment.} We would like to gratefully acknowledge the anonymous referee for the critical comments on the early drafts of this paper and especially the suggestions for improving Section 3.3. We would like to thank Ian Agol for answering a question related to the proof of Theorem \ref{eqfmax}.

\section{The proof} From now on, unless otherwise specified, $X$ is a finite CW-complex equipped with an action by a torus $T$ or more generally a compact connected Lie group $G$. The following $K$-theoretic abelianization result enables us to prove $K$-theoretic results in this Section in the $T$-equivariant case first and then generalize to the $G$-equivariant case. 

\begin{theorem}[cf. {\cite[Theorem 4.9(ii)]{HLS}}]\label{GtoT}	
Let $T$ be a maximal torus of $G$ and $W$ the Weyl group. The map $r^*: K_G^*(X; \mathbb{Q})\to K_T^*(X; \mathbb{Q})$ restricting the $G$-action to the $T$-action is an injective map onto $K^*_T(X; \mathbb{Q})^W$. Here if $w\in W$ and $V$ is an equivariant $T$-vector bundle, $w$ takes $V$ to the same underlying vector bundle with $T$-action twisted by $w$, and this $W$-action on the set of isomorphism classes of equivariant $T$-vector bundles induces the $W$-action on $K_T^*(X)$.
\end{theorem}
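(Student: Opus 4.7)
The plan is to decompose the statement into two assertions: (a) the image of $r^*$ is contained in $K_T^*(X;\mathbb{Q})^W$, and (b) the corestricted map $K_G^*(X;\mathbb{Q}) \to K_T^*(X;\mathbb{Q})^W$ is an isomorphism. Part (a) is essentially formal. Given $n\in N(T)$ representing $w\in W$ and a $G$-equivariant complex vector bundle $V\to X$, the action of $n$ on the total space is a bundle isomorphism covering $n\cdot : X\to X$ that intertwines the original $T$-action on $V$ with its $w$-twist; hence $[V]\in K_G^*(X)$ restricts to a $W$-fixed class.

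For part (b) I would use the shearing map
\[
G\times_T X \;\xrightarrow{\;\sim\;}\; G/T \times X,\qquad [g,x]\mapsto (gT,\,gx),
\]
a $G$-equivariant diffeomorphism (with the diagonal action on the target). Combined with the induction isomorphism $K_G^*(G\times_T Y)\cong K_T^*(Y)$ applied to $Y = X$, this yields a natural $R(G)$-module isomorphism $K_T^*(X)\cong K_G^*(G/T\times X)$ under which $r^*$ corresponds to the pullback $\mathrm{pr}_X^*: K_G^*(X)\to K_G^*(G/T\times X)$, and the $W$-action on $K_T^*(X)$ corresponds to the action on $K_G^*(G/T\times X)$ induced by right translation of $N(T)/T$ on the first factor.

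Since $K_G^*(G/T) = R(T)$ and, by a rational form of the Pittie--Steinberg theorem, $R(T;\mathbb{Q})$ is a free module over $R(G;\mathbb{Q}) = R(T;\mathbb{Q})^W$ of rank $|W|$, I would invoke an equivariant rational K\"unneth isomorphism
\[
K_G^*(G/T \times X;\mathbb{Q}) \;\cong\; K_G^*(X;\mathbb{Q}) \otimes_{R(G;\mathbb{Q})} R(T;\mathbb{Q}),
\]
under which $r^*$ becomes $a\mapsto a\otimes 1$ and the $W$-action is only on the second tensor factor. Taking $W$-invariants (which commutes with $\otimes_{R(G;\mathbb{Q})}$ by flatness) and using $R(T;\mathbb{Q})^W = R(G;\mathbb{Q})$ then gives
\[
K_T^*(X;\mathbb{Q})^W \;\cong\; K_G^*(X;\mathbb{Q}) \otimes_{R(G;\mathbb{Q})} R(G;\mathbb{Q}) \;=\; K_G^*(X;\mathbb{Q}),
\]
which yields both the injectivity of $r^*$ and the identification of its image with the $W$-invariants.

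The main technical obstacle is justifying the K\"unneth isomorphism for $G/T \times X$, since the general equivariant K\"unneth theorem requires Tor-vanishing hypotheses that are not automatic. I would handle this by a Mayer--Vietoris induction on the cells of a $G$-CW structure on $X$, reducing to the base case $X = G/H$ where the identification $K_G^*(G/H\times G/T) \cong K_H^*(G/T)$ combined with the rational freeness of the representation rings involved allows a direct computation; alternatively, one may simply quote the corresponding K\"unneth-type statement already present in \cite[Theorem 4.9]{HLS} and feed it into the above chain of identifications.
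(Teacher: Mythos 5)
The paper does not actually prove this statement: it is quoted (in rationalized form) from \cite[Theorem~4.9(ii)]{HLS}, so there is no in-paper argument to measure yours against. Judged on its own, your part (a) and the formal reductions in part (b) are fine: the identification $K_T^*(X)\cong K_G^*(G/T\times X)$ via the shearing map, the transport of $r^*$ to $\mathrm{pr}_X^*$, the placement of the $W$-action on the $G/T$ factor, and the passage to invariants via the averaging idempotent $\tfrac{1}{|W|}\sum_w w$ (which needs only characteristic zero, not flatness) are all correct. The gap is exactly at the step you flag as the ``main technical obstacle,'' and it is worse than a missing reference: the ``rational form of the Pittie--Steinberg theorem'' is \emph{false} for general compact connected $G$. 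Freeness (even flatness) of $R(T;\mathbb{Q})$ over $R(G;\mathbb{Q})=R(T;\mathbb{Q})^W$ amounts to smoothness of $T_{\mathbb{C}}/W$, which fails when point stabilizers $W_t$ are not reflection subgroups; this happens precisely when centralizers are disconnected, e.g.\ for $G=PSU(3)$ and $t=\mathrm{diag}(1,\omega,\omega^2)$, where $W_t\cong\mathbb{Z}/3$ acts on $\mathfrak{t}$ without reflections. Consequently your K\"unneth isomorphism also fails there: $K_G^*(G/T\times G/T;\mathbb{Q})\cong K_T^*(G/T;\mathbb{Q})$ is free of rank $|W|$ over $R(T;\mathbb{Q})$ by the Bruhat cell structure, whereas $R(T;\mathbb{Q})\otimes_{R(G;\mathbb{Q})}R(T;\mathbb{Q})$ is not when the extension is not flat. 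Your argument is therefore only valid when $\pi_1(G)$ is torsion-free, which is not assumed in the statement; and the fallback of citing \cite[Theorem~4.9]{HLS} proves nothing, since part (ii) of that theorem \emph{is} the statement at issue.

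The route that does work in full generality --- and the one behind the cited theorem --- avoids K\"unneth and freeness entirely. The map $\pi\colon G\times_T X\simeq G/T\times X\to X$ carries a holomorphic induction pushforward $\pi_!\colon K_T^*(X)\to K_G^*(X)$, and $\pi_!(1)=1$ by Borel--Weil--Bott, so the projection formula gives $\pi_!\circ r^*=\mathrm{id}$ and hence (split, integral) injectivity of $r^*$. For the image, one writes $r^*\circ\pi_!$ as the top divided-difference operator $D_{w_0}=D_{s_{i_1}}\cdots D_{s_{i_\ell}}$; each $D_{s}$ fixes $s$-invariant classes, so $r^*\pi_!(a)=a$ for every $a\in K_T^*(X)^W$, giving surjectivity onto the invariants. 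This works for any compact connected $G$ and needs no rationalization; if you want a self-contained proof rather than a citation, that is the argument to write down.
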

\begin{definition}
	Let $H^{**}_G(X)$ be the completion of $H_G^*(X)$ as a $H_G^*(\text{pt})$-module at the augmentation ideal $J:=H_G^{+}(\text{pt})$ (cf. the paragraph preceeding \cite[Proposition 2.8]{RK}). 
\end{definition}	
The equivariant Chern character for a finite CW-complex with a $G$-action is the map
\[\text{ch}_G: K_G^*(X; \mathbb{Q})\to H_G^{**}(X)\]
which is defined by applying the Borel construction to the non-equivariant Chern character (cf. the discussion before \cite[Lemma 3.1]{RK}). Like the non-equivariant Chern character, $\text{ch}_G$ maps $K_G^0(X; \mathbb{Q})$ to the even degree part of $H_G^{**}(X)$ and $K_G^{-1}(X; \mathbb{Q})$ to the odd degree part. The image of $\text{ch}_G$ lies in $H_G^{**}(X)$ for the following reason which is borrowed from the proof of \cite[Lemma 3.1]{RK}: as $X$ is a finite CW-complex, we can choose $a_1, a_2, \cdots, a_m\in H_G^*(X)$ which generate $H_G^*(X)$ as a $H_G^*(\text{pt})$-module. Let 
\[a_i\cdot a_j=\sum_{k=1}^m f_{ij}^k a_k\]
for $f_{ij}^k\in H_G^*(\text{pt})$, and $c$ be $c_1^G(L)$ for some $G$-equivariant line bundle $L$ such that 
\[c=\sum_{i=1}^m g_ia_i\]
for $g_i\in H_G^*(\text{pt})$. So
\[\text{ch}_G(L)=e^c=1+\sum_i g_ia_i+\frac{1}{2}\sum_{i, j, k}g_ig_jf_{ij}^k a_k+\frac{1}{6}\sum_{i, j, k, l, p}g_ig_jg_lf_{ij}^kf_{kl}^pa_p+\cdots.\]
Write $\text{ch}_G(L)=1+\sum_{i=1}^mp_ia_i$, where $p_i$ are power series in $g_i$ and $f_{ij}^k$. Identifying $g_i$ and $f_{ij}^k$ with $W$-invariant polynomials on $\mathfrak{t}$ through the identification $H_G^*(\text{pt})\cong H_T^*(\text{pt})^W\cong S(\mathfrak{t}^*)^W$ and using the estimate for $p_i$ given in the proof of \cite[Lemma 3.1]{RK}, we have that $p_i$ are in $H_G^{**}(\text{pt})$ and hence $ch_G(L)\in H_G^{**}(X)$. The assertion $\text{ch}_G(E)\in H_G^{**}(X)$ for general equivariant $G$-vector bundle $E$ follows from the splitting principle.

\begin{proposition}\label{equivchern}
	Let $G$ be a compact connected Lie group acting on a finite CW-complex $X$. Then the equivariant Chern character 
	\[\text{ch}_G: K^*_G(X; \mathbb{Q})\to H_G^{**}(X)\]
	is injective, and $\text{ch}_G^{-1}(J)=I(G; \mathbb{Q})$ when $X$ is a point. 
\end{proposition}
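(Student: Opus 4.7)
The plan is to reduce to the torus case via Theorem \ref{GtoT}, dispatch the base case $X = \text{pt}$ by a direct exponential-linear-independence argument, and then extend to general $X$ by factoring through the rational Atiyah--Segal completion isomorphism.

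First, I would set up the commutative square relating $\text{ch}_G$ and $\text{ch}_T$ through the restriction maps. Theorem \ref{GtoT} makes $r^*\colon K^*_G(X;\mathbb{Q}) \hookrightarrow K^*_T(X;\mathbb{Q})$ injective. The analogous rational abelianization for equivariant cohomology gives $H^*_G(X;\mathbb{Q}) \cong H^*_T(X;\mathbb{Q})^W$, and since $|W|$ is invertible in $\mathbb{Q}$, this identification persists after $J$-adic completion (taking $W$-invariants is exact and commutes with completion for finitely generated modules over $H^*_T(\text{pt})$). Hence, by naturality of $\text{ch}$, injectivity of $\text{ch}_G$ reduces to that of $\text{ch}_T$, and the characterization $\text{ch}_G^{-1}(J) = I(G;\mathbb{Q})$ at a point reduces to the corresponding statement for $T$.

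For $X = \text{pt}$, the Chern character is the map $\text{ch}_T\colon R(T;\mathbb{Q}) \to \widehat{S(\mathfrak{t}^*)}$ sending a character $e^\lambda$ to the formal exponential $\exp(\lambda) = \sum_k \lambda^k/k!$. Its injectivity follows from the linear independence of exponentials of distinct weights: restricting $\sum c_i \exp(\lambda_i)$ along a generic line $\mathbb{Q} v \subset \mathfrak{t}$ yields a power series $\sum c_i e^{\lambda_i(v) t} \in \mathbb{Q}[[t]]$, and extracting coefficients produces a Vandermonde system that forces all $c_i = 0$. Moreover, the constant term of $\text{ch}_T\big(\sum c_i e^{\lambda_i}\big)$ equals $\sum c_i$, which vanishes exactly when $\sum c_i e^{\lambda_i} \in I(T;\mathbb{Q})$, giving the preimage characterization.

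For general $X$, I would factor $\text{ch}_T$ through the $I(T;\mathbb{Q})$-adic completion of $K^*_T(X;\mathbb{Q})$,
\[K^*_T(X;\mathbb{Q}) \xrightarrow{\kappa} K^*_T(X;\mathbb{Q})^{\wedge}_{I(T;\mathbb{Q})} \xrightarrow{\widehat{\text{ch}_T}} H^{**}_T(X),\]
where $\widehat{\text{ch}_T}$ is the isomorphism obtained by combining the rational Atiyah--Segal completion theorem with the non-equivariant Chern character on the Borel space $X \times_T ET$. Injectivity of $\text{ch}_T$ thus reduces to injectivity of the completion map $\kappa$. Since $X$ is a finite CW-complex, $K^*_T(X;\mathbb{Q})$ is finitely generated over the Noetherian ring $R(T;\mathbb{Q})$, and the Artin--Rees lemma identifies $\ker \kappa$ with the submodule annihilated by some element of $1 + I(T;\mathbb{Q})$. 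Showing that this submodule vanishes is the main obstacle; the approach is an induction on the equivariant CW structure of $X$ via Mayer--Vietoris and the five-lemma, reducing to orbit-type cells $X = T/H$ where $K^*_T(T/H;\mathbb{Q}) = R(H;\mathbb{Q})$ and the injectivity is controlled by an exponential-map argument on $R(H;\mathbb{Q})$ parallel to the point case.
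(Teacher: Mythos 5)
Your torus reduction and your point case are fine, and they are genuinely different from (and more self-contained than) the paper's treatment: the paper handles the point case by a diagram chase with the forgetful maps, and gets injectivity for general $G$ directly, without passing to $T$, by composing the completion map $K_G^*(X)\to K^*(X\times_G EG)$ (injective by a cited Hausdorffness statement of Atiyah--Hirzebruch for connected $G$) with the Chern character on the Borel construction, realized as an inverse limit over the finite Milnor joins $EG_n$. But the step you yourself flag as ``the main obstacle'' --- injectivity of the completion map $\kappa\colon K_T^*(X;\mathbb{Q})\to K_T^*(X;\mathbb{Q})^{\wedge}_{I(T;\mathbb{Q})}$ --- is precisely the heart of the proposition, and the induction you sketch for it does not close.

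There are two concrete failures. First, the five-lemma (really the four-lemma) yields injectivity of the middle vertical map only if one of the outer vertical maps is \emph{surjective}; in a map of Mayer--Vietoris sequences whose vertical arrows are completion maps, no such surjectivity is available, since $M\to \widehat{M}$ is essentially never onto. So injectivity of $\kappa$ on $X_{n-1}$ and on the attached cells does not propagate to $X_n$. Second, and more fatally, the proposed base case is false: for an orbit $T/H$ with $H$ disconnected, $K_T^*(T/H;\mathbb{Q})\cong R(H;\mathbb{Q})$ carries the $I(H;\mathbb{Q})$-adic topology (Atiyah--Segal for the pair $H\subset T$), and for a finite quotient or factor $F$ of $H$ the ideal $I(F;\mathbb{Q})$ is generated by an idempotent of the semisimple ring $R(F;\mathbb{Q})$, so $I(F;\mathbb{Q})^2=I(F;\mathbb{Q})$ and the completion map annihilates it. Concretely, for $T=S^1$ acting on $S^1/(\mathbb{Z}/p)$ one has $K_T^*(S^1/(\mathbb{Z}/p);\mathbb{Q})\cong R(\mathbb{Z}/p;\mathbb{Q})\cong\mathbb{Q}^p$ while $H_T^{**}(S^1/(\mathbb{Z}/p))\cong H^{*}(B(\mathbb{Z}/p);\mathbb{Q})\cong\mathbb{Q}$, so no exponential-type argument on $R(H;\mathbb{Q})$ ``parallel to the point case'' can make $\kappa$ injective on orbits with disconnected stabilizers --- indeed this example shows that the injectivity assertion is delicate exactly when disconnected isotropy appears, which is why the paper leans on the global Hausdorffness of the $I(G)$-adic topology for connected $G$ rather than on an orbit-by-orbit induction. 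If you want a self-contained argument, that global statement is what you must prove; the local, cell-by-cell version of it is simply not true.
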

\begin{proof}
	By \cite[Theorem 2.1]{AS}, $K^*(X\times_G EG)\cong K^*_G(X\times EG)$ is the completion of $K^*_G(X)$ at $I(G)$. The map $\iota: K^*_G(X)\to K^*(X\times_GEG)$ induced by the projection map $X\times EG\to X$ is injective because the $I(G)$-adic topology of the completion is Hausdorff if $G$ is connected (cf. the Note immediately preceding \cite[Section 4.5]{AH}). It follows that the rationalized map $\iota\otimes\mathbb{Q}: K^*_G(X; \mathbb{Q})\to K^*(X\times_G EG; \mathbb{Q})$ is injective as well. On the other hand, let $EG_n$ be the Milnor join of $n$ copies of $G$. Then $X\times_G EG_n$ is compact and the ordinary Chern character map $\text{ch}_n: K^*(X\times_G EG_n; \mathbb{Q})\to H^*(X\times_G EG_n)$ is an isomorphism. Note that 
	\[K^*(X\times_G EG; \mathbb{Q})\cong\lim_{\substack{\longleftarrow\\n}}K^*(X\times_G EG_n; \mathbb{Q})\]
	(see \cite[Corollary 2.4, Proposition 4.1 and proof of Proposition 4.2]{AS}). It follows that the map 
	\[\text{ch}: K^*(X\times_G EG; \mathbb{Q})\to H_G^{**}(X)\]
	is the inverse limit of the isomorphisms $ch_n$ and injective by the left-exactness of inverse limit. The map $\text{ch}_G$ is the composition of the two injective maps $\iota\otimes\mathbb{Q}$ 
	and $\text{ch}: K^*(X\times_G EG; \mathbb{Q})\to H_G^{**}(X)$. Therefore $\text{ch}_G$ is injective. Next, consider the commutative diagram
	\begin{eqnarray*}
		\xymatrix{R(G; \mathbb{Q})\ar[r]\ar[d]_{\text{ch}_G}&K^*(\text{pt}; \mathbb{Q})\ar[d]^{\text{ch}}\\ H_G^{**}(\text{pt})\ar[r]& H^*(\text{pt})}
	\end{eqnarray*}
	where the two horizontal maps are forgetful maps. Since $J$ is the kernel of the bottom map and both $\text{ch}_G$ and $\text{ch}$ are injective, $\text{ch}_G^{-1}(J)$ is the kernel of the top map, which is precisely $I(G; \mathbb{Q})$. 
\end{proof}

Under the condition of weak equivariant formality, \cite[Proposition 4.2]{HL} asserts that the kernel of $f$ is $I(G)\cdot K_G^*(X)$. In fact, we also have

\begin{lemma}\label{kerforgetm}
	Let $X$ be a finite CW-complex which is acted on by a compact connected Lie group $G$ equivariantly formally. Then the kernel of the forgetful map
	\[f_G\otimes\text{Id}_\mathbb{Q}: K_G^*(X; \mathbb{Q})\to K^*(X; \mathbb{Q})\]
	is $I(G; \mathbb{Q})\cdot K_G^*(X; \mathbb{Q})$.
\end{lemma}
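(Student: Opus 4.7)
The easy inclusion $I(G;\mathbb{Q})\cdot K_G^*(X;\mathbb{Q}) \subseteq \ker(f_G \otimes \text{Id}_\mathbb{Q})$ is immediate: $f_G\otimes\text{Id}_\mathbb{Q}$ restricts to the rational augmentation $R(G;\mathbb{Q}) \to \mathbb{Q}$, whose kernel is $I(G;\mathbb{Q})$. It therefore suffices to prove that the induced map
\[
\bar f_G \colon K_G^*(X;\mathbb{Q}) \big/ \bigl( I(G;\mathbb{Q}) \cdot K_G^*(X;\mathbb{Q}) \bigr) \to K^*(X;\mathbb{Q})
\]
is injective.

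The strategy is to transport this question to cohomology via the Chern character, where equivariant formality becomes directly available. Write $M := K_G^*(X;\mathbb{Q})$, $R := R(G;\mathbb{Q})$, and $I := I(G;\mathbb{Q})$. By the Atiyah-Segal completion theorem and the rational Chern character, as recalled in the proof of Proposition \ref{equivchern}, the completed $\text{ch}_G$ induces an isomorphism $M^\wedge_I \cong H_G^{**}(X)$ of modules over $R^\wedge_I \cong H_G^{**}(\text{pt})$; the point case of Proposition \ref{equivchern} then further ensures that this identification carries the maximal ideal $I \cdot R^\wedge_I$ onto the maximal ideal $J \cdot H_G^{**}(\text{pt})$. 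Since $M$ is finitely generated over the Noetherian ring $R$, the standard identification $M/IM \cong M^\wedge_I /(I\cdot M^\wedge_I)$ combined with the above yields
\[
M/IM \;\cong\; H_G^{**}(X) \big/ \bigl( J \cdot H_G^{**}(X) \bigr).
\]

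Equivariant formality now enters through $H_G^*(X;\mathbb{Q}) \cong H_G^*(\text{pt}) \otimes H^*(X;\mathbb{Q})$ as $H_G^*(\text{pt})$-modules: because $H^*(X;\mathbb{Q})$ is finite-dimensional, this passes to completions and collapses the right-hand side above to $H^*(X;\mathbb{Q})$. The commutative Chern character square relating $f_G\otimes\text{Id}_\mathbb{Q}$ to the forgetful map $H_G^{**}(X) \to H^*(X;\mathbb{Q})$, combined with the Chern character isomorphism $K^*(X;\mathbb{Q}) \cong H^*(X;\mathbb{Q})$, then identifies the resulting composite $M/IM \xrightarrow{\sim} H^*(X;\mathbb{Q})$ with $\bar f_G$ post-composed with the non-equivariant Chern character. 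Hence $\bar f_G$ must itself be an isomorphism, and in particular injective, which completes the argument. The main obstacle is the careful bookkeeping to verify that all these identifications assemble into one commutative diagram, especially that the completion iso $M^\wedge_I \cong H_G^{**}(X)$ really is $\text{ch}_G$ at the level of $R$-modules; as a byproduct, surjectivity of $\bar f_G$ also drops out, which gives the RKEF property of $X$ at the same time.
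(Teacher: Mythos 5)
Your overall route is the same as the paper's: place the forgetful maps in the commutative Chern character square, use equivariant formality (together with finite generation of $H_G^*(X)$ over $H_G^*(\text{pt})$ and base change to the completion) to identify the kernel of the completed cohomological forgetful map as $J\cdot H_G^{**}(X)$, and pull back along $\text{ch}_G$. The genuine gap is the step where you assert that ``the completed $\text{ch}_G$ induces an isomorphism $M^\wedge_I\cong H_G^{**}(X)$ \ldots as recalled in the proof of Proposition \ref{equivchern}.'' Proposition \ref{equivchern} establishes only the \emph{injectivity} of $\text{ch}_G\colon K_G^*(X;\mathbb{Q})\to H_G^{**}(X)$: there, $\text{ch}\colon K^*(X\times_G EG;\mathbb{Q})\to H_G^{**}(X)$ is obtained as an inverse limit of isomorphisms and is only claimed to be injective, because identifying $\varprojlim_n H^*(X\times_G EG_n)$ with the $J$-adic completion $H_G^{**}(X)$, and commuting $I$-adic completion with rationalization, are separate nontrivial points (note that $\bigl(K_G^*(X)^\wedge_I\bigr)\otimes\mathbb{Q}$ and $\bigl(K_G^*(X;\mathbb{Q})\bigr)^\wedge_{I(G;\mathbb{Q})}$ need not agree a priori; already $\mathbb{Z}[[u]]\otimes\mathbb{Q}\neq\mathbb{Q}[[u]]$). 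The isomorphism you want is essentially the Rosu--Knutson completion theorem relating equivariant $K$-theory and equivariant cohomology, and it must be invoked or proved; it does not follow from the ingredients you cite.

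This gap is load-bearing. Without $M^\wedge_I\cong H_G^{**}(X)$, your chain $M/IM\cong H_G^{**}(X)/J\cdot H_G^{**}(X)\cong H^*(X)$ breaks at the first link, and what remains to be shown is precisely that $\text{ch}_G$ induces an injection $M/IM\to H_G^{**}(X)/J\cdot H_G^{**}(X)$, i.e. that $\text{ch}_G^{-1}\bigl(J\cdot H_G^{**}(X)\bigr)=I\cdot M$ --- which, given the commutative square and the injectivity of the nonequivariant $\text{ch}$, is equivalent to the lemma itself; as written the argument is circular at this point. The paper instead works with the uncompleted $\text{ch}_G$, using only its injectivity and the identity $\text{ch}_G^{-1}(J)=I(G;\mathbb{Q})$ at a point, and concludes $\ker(f_G\otimes\text{Id}_{\mathbb{Q}})=\text{ch}_G^{-1}\bigl(J\cdot H_G^{**}(X)\bigr)=I(G;\mathbb{Q})\cdot K_G^*(X;\mathbb{Q})$. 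If you prefer your version, you should explicitly cite or prove the completed rational Chern character isomorphism for finite $G$-CW complexes; granting that, the remainder of your argument, including the byproduct that $\bar f_G$ is onto, goes through.
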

\begin{proof}
	In the following diagram, 
	\begin{eqnarray}\label{cherndiag}
		\xymatrix{K_G^*(X; \mathbb{Q})\ar[r]^{f_G\otimes\text{Id}_\mathbb{Q}}\ar[d]_{\text{ch}_G}&K^*(X; \mathbb{Q})\ar[d]^{\text{ch}}\\ H^{**}_G(X)\ar[r]^{\widetilde{g}_G\otimes\text{Id}_\mathbb{Q}}& H^*(X)}
	\end{eqnarray}
	where $\widetilde{g}_G\otimes\text{Id}_\mathbb{Q}$ is the forgetful map, $H_G^{**}(X)$ is the completion of $H_G^*(X)$ at the augmentation ideal $J$ of $H_G^*(\text{pt})$. Since $X$ is an equivariantly formal $G$-space, $H_G^*(X)$ is isomorphic to $H_G^*(\text{pt})\otimes H^*(X)$ as a $H_G^*(\text{pt})$-module, and the forgetful map
	\[g_G\otimes\text{Id}_\mathbb{Q}: H_G^*(X)\to H^*(X)\]
	has $J\cdot H_G^*(X)$ as the kernel. Since $H_G^*(X)$ is a finitely generated module over the Noetherian ring $H_G^*(\text{pt})$, a simple result on completions (cf. \cite[Theorem 55]{Ma}) implies that $H^{**}_G(X)\cong H_G^*(X)\otimes_{H^*_G(\text{pt})}H^{**}_G(\text{pt})$. So the kernel of $\widetilde{g}_G\otimes\text{Id}_\mathbb{Q}$ is $J\cdot H^{**}_G(X)$. By Proposition \ref{equivchern}, the preimage $\text{ch}_G^{-1}(J)$ is $I(G; \mathbb{Q})$ and $\text{ch}_G$ is injective. It follows that the kernel of $f_G\otimes\text{Id}_\mathbb{Q}$ is $\text{ch}_G^{-1}(J\cdot H_G^{**}(X))=I(G; \mathbb{Q})\cdot K_G^*(X; \mathbb{Q})$. 
\end{proof}


\begin{proof}[Proof of Theorem \ref{rkef&rcef}, $(1)\iff (2)$]
	We first deal with the $T$-equivariant case, where $T$ is a maximal torus of $G$. We claim that, if $X$ is an equivariantly formal $T$-space, we have the following string of (in)equalities.
	\[\text{dim}_\mathbb{Q} K^*(X^T; \mathbb{Q})=\text{rank}_{R(T; \mathbb{Q})}K_T^*(X; \mathbb{Q})\leq \text{dim } K_T^*(X; \mathbb{Q})/I(T; \mathbb{Q})\cdot K_T^*(X; \mathbb{Q})\leq \text{dim } K^*(X; \mathbb{Q}).\]
	Applying Segal's localization theorem to the case of torus group actions (cf. \cite[Proposition 4.1]{Se}), we have that the restriction map $K_T^*(X; \mathbb{Q})\to K_T^*(X^T; \mathbb{Q})$ becomes an isomorphism after localizing at the zero prime ideal, i.e. to the field of fraction of $R(T; \mathbb{Q})$. So $\text{rank}_{R(T; \mathbb{Q})}K_T^*(X; \mathbb{Q})=\text{rank}_{R(T; \mathbb{Q})}K_T^*(X^T; \mathbb{Q})$. By \cite[Proposition 2.2]{Se}, $K_T^*(X^T; \mathbb{Q})$ is isomorphic to $R(T; \mathbb{Q})\otimes K^*(X^T; \mathbb{Q})$, whose rank over $R(T; \mathbb{Q})$ equals $\text{dim}_\mathbb{Q} K^*(X^T; \mathbb{Q})$. The first equality then follows. Next, by \cite[Proposition 5.4]{Se} and the discussion thereafter, we have that $K^*_T(X; \mathbb{Q})$ is a finite $R(T; \mathbb{Q})$-module. After localizing $K_T^*(X; \mathbb{Q})$ at $I(T; \mathbb{Q})$ and reduction modulo the same ideal, we have that $K_T^*(X; \mathbb{Q})_{I(T; \mathbb{Q})}/I(T; \mathbb{Q})\cdot K_T^*(X; \mathbb{Q})_{I(T; \mathbb{Q})}$ is a finite dimensional $\mathbb{Q}$-vector space. We let $n$ be the dimension of this vector space, and $x_1, \cdots, x_n\in K^*_T(X; \mathbb{Q})_{I(T; \mathbb{Q})}/I(T; \mathbb{Q})\cdot K_T^*(X; \mathbb{Q})_{I(T; \mathbb{Q})}$ be its basis. Finite generation of $K_T^*(X; \mathbb{Q})$ as a module over the Noetherian ring $R(T; \mathbb{Q})$ enables us to invoke Nakayama lemma, and have that there exist lifts $\widehat{x}_1, \cdots, \widehat{x}_n\in K_T^*(X; \mathbb{Q})_{I(T; \mathbb{Q})}$ that generate $K_T^*(X; \mathbb{Q})_{I(T; \mathbb{Q})}$ as a $R(T; \mathbb{Q})_{I(T; \mathbb{Q})}$-module. It follows, after further localization to the field of fraction of $R(T; \mathbb{Q})$, that $\widehat{x}_1, \cdots, \widehat{x}_n$ span $K_T^*(X; \mathbb{Q})_{(0)}$ as a $R(T; \mathbb{Q})_{(0)}$-vector space, and that 
	\[\text{dim}_{R(T; \mathbb{Q})_{(0)}} K_T^*(X; \mathbb{Q})_{(0)}\leq \text{dim}_\mathbb{Q} K_T^*(X; \mathbb{Q})_{I(T; \mathbb{Q})}/I(T; \mathbb{Q})\cdot K_T^*(X; \mathbb{Q})_{I(T; \mathbb{Q})}=n.\] 
	Noting the isomorphism $K_T^*(X; \mathbb{Q})/I(T; \mathbb{Q})\cdot K_T^*(X; \mathbb{Q})\cong K_T^*(X; \mathbb{Q})_{I(T; \mathbb{Q})}/I(T; \mathbb{Q})\cdot K_T^*(X; \mathbb{Q})_{I(T; \mathbb{Q})}$, we arrive at the first inequality. Finally, the last inequality follows from Lemma \ref{kerforgetm}. 

	If $X$ is an equivariantly formal $T$-space, then $\text{dim}H^*(X)=\text{dim}H^*(X^T)$ (see \cite[p. 46]{Hs}). The Chern character isomorphism implies that $\text{dim}K^*(X^T; \mathbb{Q})=\text{dim}K^*(X; \mathbb{Q})$ which, together with the (in)equalities in the above claim, yields $\text{dim}K_T^*(X; \mathbb{Q})/I(T; \mathbb{Q})\cdot K_T^*(X; \mathbb{Q})=\text{dim}K^*(X; \mathbb{Q})$ or, equivalently, that $X$ is RKEF.
	
	Assume on the other hand that $X$ is RKEF. Consider the commutative diagram (\ref{cherndiag}). Since $f_T\otimes\text{Id}_\mathbb{Q}$ is onto and $\text{ch}$ is an isomorphism, $\widetilde{g}_T\otimes\text{Id}_\mathbb{Q}$ is onto. By \cite[Theorem 55]{Ma}, we have that $H_T^{**}(X)\cong H_T^*(X)\otimes_{H^*_T(\text{pt})}H_T^{**}(\text{pt})$. Applying $\widetilde{g}_T\otimes \text{Id}_\mathbb{Q}$ gives $H^*(X)=\text{Im}(\widetilde{g}_T\otimes\text{Id}_\mathbb{Q})=\text{Im}({g}_T\otimes \text{Id}_\mathbb{Q})\otimes_\mathbb{Q}\mathbb{Q}=\text{Im}(g_T\otimes\text{Id}_\mathbb{Q})$. Hence $X$ is $T$-equivariantly formal.

	With the equivalence of equivariant formality and RKEF for $T$-action we have just proved and the fact that, if $T$ is a maximal torus of $G$ which is compact and connected, $T$-equivariant formality is equivalent to $G$-equivariant formality (cf. \cite[Proposition 2.4]{GR}), it suffices to show that $f_T\otimes\text{Id}_\mathbb{Q}$ is onto if and only if $f_G\otimes\text{Id}_\mathbb{Q}$ is onto in order to establish the equivalence of equivariant formality and RKEF for $G$-action. One direction is easy: if $f_G\otimes\text{Id}_\mathbb{Q}$ is onto, so is $f_T\otimes\text{Id}_\mathbb{Q}$ because $f_G\otimes\text{Id}_\mathbb{Q}=(f_T\otimes\text{Id}_\mathbb{Q})\circ r^*$. Conversely, suppose that $f_T\otimes\text{Id}_\mathbb{Q}$ is onto. Then any $x\in K^*(X; \mathbb{Q})$ admits a lift $\widetilde{x}\in K_T^*(X; \mathbb{Q})$. Note that for any $w\in W$, $(f_T\otimes\text{Id}_\mathbb{Q})(w\cdot \widetilde{x})=x$. It follows that the average 
	\[\overline{x}:=\frac{1}{|W|}\sum_{w\in W}w\cdot\widetilde{x}\]
	is also a lift of $x$. Moreover, by Theorem \ref{GtoT}, $\overline{x}\in r^*K_G(X; \mathbb{Q})$. So $(r^*)^{-1}(\overline{x})\in K_G^*(X; \mathbb{Q})$ is a lift of $x$ and $f_G\otimes\text{Id}_\mathbb{Q}$ is onto as well. 
\end{proof}

\begin{proof}[Proof of Theorem \ref{rkef&rcef}, $(1)\iff (3)$]
	That $\mathbb{Q}$-weakly equivariant formality implies RKEF is immediate (cf. \cite[Definition 4.1]{HL}). On the other hand, if $X$ is a RKEF $G$-space, then by Theorem \ref{rkef&rcef}, $(1)\Longrightarrow (2)$, $X$ is an equivariantly formal $G$-space. The map
	\begin{align*}
		K_G^*(X; \mathbb{Q})\otimes_{R(G; \mathbb{Q})}\mathbb{Q}&\to K^*(X; \mathbb{Q})\\
		\alpha\otimes z&\mapsto f_G(\alpha)z
	\end{align*}
	is injective by Lemma \ref{kerforgetm} and surjective by RKEF. Hence $X$ is a $\mathbb{Q}$-weakly equivariantly formal $G$-space. This completes the proof. 
\end{proof}

\section{Some applications}

In this Section, we shall demonstrate the utility of Theorem \ref{rkef&rcef} by giving alternative proofs of some previous results.

\subsection{Conjugation action on compact Lie groups}\label{conjugation} Let $G$ be a compact connected Lie group with conjugation action by itself. It is well-known that this action is equivariantly formal. See, for example, \cite[Sect. 11.9, Item 6]{GS}) for a sketch of proof for the case $G=U(n)$, and \cite{J} for an explicit construction of equivariant extensions of the generators of $H^*(G)$. We will show equivariant formality of conjugation action by proving that $G$ is a RKEF $G$-space. By \cite[II, Theorem 2.1]{Ho}, 
\[K^*(G; \mathbb{Q})\cong\bigwedge\nolimits_\mathbb{Q}^*(R\otimes\mathbb{Q}),\]
where $R$ is the image of the map
\[\delta: R(G)\to K^{-1}(G)\]
which sends $\rho\in R(G)$ to the following complex of vector bundles\footnote{The map $\delta$, which was defined in \cite{BZ} and corrected in \cite{F}, is the same as the map $\beta$ defined in \cite{Ho}.}
\begin{align*}
	0\longrightarrow G\times\mathbb{R}\times V &\longrightarrow G\times\mathbb{R}\times V\longrightarrow 0\\
	(g, t, v)&\mapsto\begin{cases}(g, t, -t\rho(g)v),\ \ &\text{if }t\geq 0, \\ (g, t, v),\ \ &\text{if }t\leq 0.\end{cases}
\end{align*}
For any $\rho$, $\delta(\rho)$ admits an equivariant lift in $K_G^*(G)$ because $G\times\mathbb{R}\times V$ can be equipped with the $G$-action given by 
\[g_0\cdot(g, t, v)=(g_0gg_0^{-1}, t, \rho(g_0)v), \]
with respect to which the middle map of the above complex of vector bundles is $G$-equivariant. Thus $f_G\otimes\text{Id}_\mathbb{Q}: K_G^*(G; \mathbb{Q})\to K^*(G; \mathbb{Q})$ is onto, i.e., $G$ is a RKEF $G$-space.

\subsection{Left translation action on $G/K$ where $\text{rank }G=\text{rank }K$}\label{equalranks} Let $G$ be a compact connected Lie group and $K$ a connected Lie subgroup of the same rank. The left translation action on $G/K$ by $G$ is well-known to be equivariantly formal, which can be proved by noting that $G/K$ satisfies the sufficient condition for equivariant formality that its odd cohomology vanish (cf. \cite[Chapter XI, Theorem VII]{GHV}). Alternatively, by the rationalized version of \cite[Theorem 4.2]{Sn} and the remark following it, 
\[K^*(G/K; \mathbb{Q})\cong R(K; \mathbb{Q})\otimes_{R(G; \mathbb{Q})}\mathbb{Q}\cong R(K; \mathbb{Q})/r^*I(G; \mathbb{Q}),\]
where $r^*: R(G; \mathbb{Q})\to R(K; \mathbb{Q})$ is the restriction map. The forgetful map $f_G\otimes\text{Id}_\mathbb{Q}: K^*_G(G/K; \mathbb{Q})\cong R(K; \mathbb{Q})\to K^*(G/K; \mathbb{Q})$ is simply the projection map and hence surjective (in fact the forgetful map sends any representation $\rho\in R(K)$ to the $K$-theory class of the homogeneous vector bundle $G\times_K V_\rho$, where $V_\rho$ is the underlying complex vector space for $\rho$). Thus $G/K$ is a RKEF $G$-space, and equivalently an equivariantly formal $G$-space.
\begin{remark}\label{rkefgood}
	In the more general case where equality of ranks of $G$ and $K$ is not assumed, a representation theoretic characterization of equivariant formality of the left translation action of $K$ on $G/K$ is given by virtue of RKEF in \cite{CF}. 
\end{remark}

\subsection{Actions with connected maximal rank isotropy subgroups} In this section we will prove the following equivariant formality result. 
\begin{theorem}\label{eqfmax}
	Let $G$ be a compact connected Lie group and $X$ a finite $G$-CW complex. Suppose that the $G$-action on $X$ has maximal rank connected isotropy subgroups. Then $X$ is an equivariantly formal $G$-space.
\end{theorem}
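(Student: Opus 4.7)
The plan is to reduce to showing that $X$ is RKEF via Theorem~\ref{rkef&rcef}, and then argue by induction on the number of $G$-cells in a $G$-CW structure on $X$. The base case, where $X$ is a single orbit $G/H$ with $H$ connected of maximal rank, is precisely Section~\ref{equalranks}: under the identification $K_G^*(G/H;\mathbb{Q})\cong R(H;\mathbb{Q})$, the forgetful map is the projection onto $R(H;\mathbb{Q})/I(G;\mathbb{Q})R(H;\mathbb{Q})\cong K^*(G/H;\mathbb{Q})$.

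For the inductive step, write $X=Y\cup_\phi(G/H\times D^n)$, where $H$ is connected of maximal rank and $Y$ is a $G$-invariant subcomplex with fewer $G$-cells. By the inductive hypothesis $Y$ is RKEF, hence equivariantly formal by Theorem~\ref{rkef&rcef}, so Lemma~\ref{kerforgetm} applies to $Y$ and yields $\ker(f_G\otimes\text{Id}_\mathbb{Q}|_Y)=I(G;\mathbb{Q})\cdot K_G^*(Y;\mathbb{Q})$. Since $H^*(G/H;\mathbb{Q})$ is concentrated in even degrees (classical for homogeneous spaces modulo connected subgroups of equal rank), the equivariant Chern character forces $K_G^{-1}(G/H;\mathbb{Q})=0$, so the relative group $K_G^*(X,Y;\mathbb{Q})\cong K_G^{*-n}(G/H;\mathbb{Q})\cong R(H;\mathbb{Q})$ is concentrated in a single $\mathbb{Z}/2$-degree. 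Compare the six-term exact sequences of the pair $(X,Y)$ in rational equivariant and non-equivariant $K$-theory through the forgetful maps, and do a diagram chase. Given $x\in K^*(X;\mathbb{Q})$, lift its restriction $x|_Y$ via the inductive hypothesis to $\tilde x\in K_G^*(Y;\mathbb{Q})$; the obstruction to extending $\tilde x$ over $X$ is the boundary $\partial\tilde x\in K_G^{*+1}(X,Y;\mathbb{Q})$. Since the non-equivariant boundary of $x|_Y$ vanishes, the base case applied to $G/H$ forces $\partial\tilde x\in I(G;\mathbb{Q})\cdot K_G^{*+1}(X,Y;\mathbb{Q})$. Using Lemma~\ref{kerforgetm} for $Y$, modify $\tilde x$ by an element of $I(G;\mathbb{Q})\cdot K_G^*(Y;\mathbb{Q})$ to cancel this obstruction and thereby produce the sought equivariant lift.

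The principal technical hurdle is to confirm that the modification above can always be carried out, which is equivalent to the intersection identity
\[
\partial\bigl(K_G^*(Y;\mathbb{Q})\bigr)\cap I(G;\mathbb{Q})\cdot K_G^{*+1}(X,Y;\mathbb{Q})=I(G;\mathbb{Q})\cdot\partial\bigl(K_G^*(Y;\mathbb{Q})\bigr).
\]
A natural approach is to strengthen the inductive hypothesis to include freeness of $K_G^*(Y;\mathbb{Q})$ as an $R(G;\mathbb{Q})$-module; combined with the rational Pittie--Steinberg freeness of $R(H;\mathbb{Q})\cong K_G^*(X,Y;\mathbb{Q})$ over $R(G;\mathbb{Q})$ for $H$ connected of maximal rank, the relevant Tor term vanishes and the intersection identity follows. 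This is the point where the connected maximal rank isotropy hypothesis is decisive and where a careful topological input (of the sort acknowledged from Ian Agol) on the structure of the $G$-cellular attachments would naturally enter.
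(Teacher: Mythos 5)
Your reduction to RKEF, your base case, and the overall shape of the diagram chase through the six-term sequences of the pair $(X,Y)$ are all fine, and you have correctly isolated where the difficulty sits. But the resolution you propose for that difficulty does not work, and the gap is genuine. Write $R=R(G;\mathbb{Q})$, $I=I(G;\mathbb{Q})$, $M=K_G^*(Y;\mathbb{Q})$, $N=K_G^{*+1}(X,Y;\mathbb{Q})$. Assuming $N$ is free, the short exact sequence $0\to\partial M\to N\to N/\partial M\to 0$ tensored with $\mathbb{Q}=R/I$ shows that your intersection identity $\partial M\cap IN=I\cdot\partial M$ is equivalent to $\operatorname{Tor}_1^{R}(\operatorname{coker}\partial,\mathbb{Q})=0$. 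This is a condition on the \emph{cokernel} of the boundary map, and it does not follow from freeness of $M$ and $N$: already for $R=\mathbb{Q}[t]$, $I=(t)$, $M=N=R$ and $\partial=$ multiplication by $t$, one has $\partial M\cap IN=tR$ while $I\cdot\partial M=t^2R$. So "the relevant Tor term vanishes" is not a consequence of Pittie--Steinberg freeness of $R(H;\mathbb{Q})$ plus an inductively assumed freeness of $K_G^*(Y;\mathbb{Q})$; you would need to control $\operatorname{Tor}_1^{R}(\operatorname{coker}\partial,\mathbb{Q})$, i.e.\ the image of $K_G^{*+1}(X,Y;\mathbb{Q})$ in $K_G^{*+1}(X;\mathbb{Q})$ --- which is essentially the module you are trying to understand. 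Indeed, unwinding the definitions, the intersection identity for the pair $(X,Y)$ is more or less \emph{equivalent} to surjectivity of the forgetful map for $X$ given surjectivity for $Y$, so as set up your induction does not close.

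For comparison, the paper avoids this module-theoretic obstruction entirely by staying at the level of vector bundles: a bundle $V$ on $X_n$ is presented by a clutching map between (stabilizations of) $V|_{X_{n-1}}$ and $F^*V$ over $G/K\times\partial\mathbb{D}^n$, and the task becomes deforming that clutching map to an equivariant one after further stabilization. This is done by first making the source and target equivariantly isomorphic (Lemma~\ref{equiviso}, which uses Lemma~\ref{kerforgetm} and Segal's embedding of equivariant bundles into trivial ones), and then encoding the resulting bundle automorphism $\gamma$ as a bundle over the mapping torus $G/K\times\partial\mathbb{D}^n\times S^1$, to which RKEF (already known for such spaces by Section~\ref{equalranks} and Theorem~\ref{rkef&rcef}) is applied. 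If you want to salvage a purely exact-sequence argument, you would have to prove the $\operatorname{Tor}_1$ vanishing for $\operatorname{coker}\partial$ by some independent input (for instance the freeness results of Adem--G\'omez under the extra hypothesis that $\pi_1(G)$ is torsion-free), but as written your key step is unproved and your stated justification for it is false.
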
	
\begin{remark}
	In fact, Theorem \ref{eqfmax} follows from \cite[Corollary 3.5]{GR}, where connectedness of isotropy subgroups is not assumed. Though the space under consideration in \cite[Corollary 3.5]{GR} is the subset of a compact $G$-manifold consisting of those points with maximal rank isotropy subgroups, its proof does not make use of this assumption and can be easily adapted to the more general case of $G$-CW complexes. Indeed the proof hinges on the observation that for any compact space $X$ with maximal rank isotropy subgroups and a maximal torus $T$, the map $G\times_{N_G(T)}X^T\to X$ given by $[g, x]\mapsto gx$ is onto and that the fibers of the map are acyclic. This enables one to assert the isomorphism $H_G^*(X)\cong H_{N_G(T)}^*(X^T)$. The latter, by abelianization, is $H_T^*(X^T)^W$, which in turn by a commutative algebra result (\cite[Lemma 2.7]{GR}) is a free module over $H_T^*(\text{pt})^W\cong H_G^*(\text{pt})$. Hence $X$ is an equivariantly formal $G$-space.
\end{remark}
\begin{remark}
	If $G$ in addition satisfies the condition that $\pi_1(G)$ be torsion-free, then $K^*_G(X; \mathbb{Q})$ is a free $R(G; \mathbb{Q})$-module with rank $\text{dim}_\mathbb{Q}K^*(X^T; \mathbb{Q})$ (\cite[Theorem 1.1]{AG}). 
\end{remark}
We would like to give a different proof of this result by using Theorem \ref{rkef&rcef} and induction on the dimension of $X$. We shall point out that the group actions considered in Sections \ref{conjugation} and \ref{equalranks} are examples of group actions we discuss in this section. However, equivariant formality of left translation actions on generalized flag manifolds as in Section \ref{equalranks} is used in the proof. 
\begin{lemma}\label{equiviso}
	Let $G$ be a compact connected Lie group acting on a finite CW-complex $X$ equivariantly formally. Let $V_1$ and $V_2$ be vector bundles on $X$ which are isomorphic nonequivariantly. Then there exist positive integers $a$ and $b$ such that $V_1^{\oplus a}\oplus\underline{\mathbb{C}}^b$ and $V_2^{\oplus a}\oplus\underline{\mathbb{C}}^b$ can be made equivariant $G$-vector bundles which are isomorphic equivariantly.
\end{lemma}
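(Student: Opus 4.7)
The plan is to reduce the lemma to a single application of RKEF and then produce the equivariant isomorphism by transport of structure along a given nonequivariant isomorphism. No chain of $K$-theoretic calculations is needed.

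First, since $X$ is an equivariantly formal $G$-space by hypothesis, Theorem \ref{rkef&rcef} yields that $X$ is an RKEF $G$-space. Applying the vector bundle interpretation of RKEF recorded in the paragraph following Definition \ref{rkef} to $V_1$, one obtains positive integers $a$ and $b$ together with a $G$-equivariant structure realizing $V_1^{\oplus a} \oplus \underline{\mathbb{C}}^b$ as a $G$-equivariant vector bundle; denote the resulting equivariant bundle by $W_1$.

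Next, fix a nonequivariant bundle isomorphism $\phi\colon V_1 \to V_2$ (which exists by hypothesis) and set $\Phi := \phi^{\oplus a} \oplus \mathrm{Id}_{\underline{\mathbb{C}}^b}$, a nonequivariant bundle isomorphism $V_1^{\oplus a} \oplus \underline{\mathbb{C}}^b \to V_2^{\oplus a} \oplus \underline{\mathbb{C}}^b$. Transport the $G$-action on $W_1$ to the target through $\Phi$ by declaring $g \cdot w := \Phi(g \cdot \Phi^{-1}(w))$ for $g \in G$ and $w \in V_2^{\oplus a} \oplus \underline{\mathbb{C}}^b$. This endows $V_2^{\oplus a} \oplus \underline{\mathbb{C}}^b$ with the structure of a $G$-equivariant vector bundle $W_2$, and by construction $\Phi\colon W_1 \to W_2$ is tautologically a $G$-equivariant bundle isomorphism.

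The only non-formal input is the vector bundle interpretation of RKEF itself; granting that, the remainder of the argument is purely transport of structure, and I foresee no substantial obstacle.
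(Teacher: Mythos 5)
Your transport-of-structure argument does prove the lemma exactly as it is worded: once RKEF furnishes an equivariant structure $W_1$ on $V_1^{\oplus a}\oplus\underline{\mathbb{C}}^b$, pushing that structure forward through $\Phi$ makes $\Phi$ tautologically equivariant, and the construction is sound as far as it goes. But this exploits a looseness in the statement and delivers strictly less than what the paper's proof establishes and what the lemma is used for. In your construction the $G$-action on $V_2^{\oplus a}\oplus\underline{\mathbb{C}}^b$ is defined purely by conjugating with $\Phi$, so you have no control over which equivariant structure lands on the $V_2$ side. In the one place the lemma is invoked --- producing the isomorphism $\beta$ in the proof of Theorem \ref{eqfmax} --- both $f^*V_0^{\oplus p}\oplus\underline{\mathbb{C}}^q$ and $W^{\oplus p}|_{G/K\times\partial\mathbb{D}^n}\oplus\underline{\mathbb{C}}^q$ already carry prescribed equivariant structures (one pulled back from the inductively constructed structure over $X_{n-1}$, the other restricted from a homogeneous bundle over $G/K\times\mathbb{D}^n$), and the subsequent clutching argument needs $\beta$ to be equivariant with respect to stabilizations of \emph{those} structures, because the structures must extend off $G/K\times\partial\mathbb{D}^n$ to the two pieces being glued. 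A structure obtained by transporting along $\Phi$ need not extend, so your version of the lemma would not support the application.

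The paper's proof is built to secure exactly this compatibility: starting from arbitrarily chosen equivariant structures $\widetilde{T}_1,\widetilde{T}_2$ on the stabilized bundles, the class $[\widetilde{T}_1]-[\widetilde{T}_2]$ lies in the kernel of the forgetful map; Lemma \ref{kerforgetm} identifies that kernel with $I(G;\mathbb{Q})\cdot K_G^*(X;\mathbb{Q})$; and unwinding this at the level of bundles via the Grothendieck construction and \cite[Proposition 2.4]{Se} yields an equivariant isomorphism between $\widetilde{T}_1^{\oplus m}$ and $\widetilde{T}_2^{\oplus m}$ after each is summed with an equivariant bundle whose underlying bundle is trivial. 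That stability statement relative to given structures is the genuine content of the lemma, and it cannot be recovered from transport of structure, which uses only the surjectivity half of RKEF. To repair the argument you would need to invoke Lemma \ref{kerforgetm} (the identification of $\ker(f_G\otimes\text{Id}_{\mathbb{Q}})$) in essentially the way the paper does.
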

\begin{proof}
	By Theorem \ref{rkef&rcef} and the discussion preceding it, there exists $p$ and $q$ such that $T_1:=V_1^{\oplus p}\oplus\underline{\mathbb{C}}^q$ and $T_2:=V_2^{\oplus p}\oplus\underline{\mathbb{\mathbb{C}}^q}$ admit equivariant structures. Let $\widetilde{T}_1$ and $\widetilde{T}_2$ denote the corresponding equivariant $G$-vector bundles. They then define the equivariant $K$-theory class $[\widetilde{T}_1]-[\widetilde{T}_2]\in K_G^*(X; \mathbb{Q})$ which lies in the kernel of the forgetful map $f_G\otimes\text{Id}_\mathbb{Q}$. By Lemma \ref{kerforgetm}, there exist a positive integer $m$, representations $\rho_1^i$ and $\rho_2^i$ of $G$ with the same dimension, and equivariant $G$-vector bundles $A_i$ and $B_i$ such that
	\[m([\widetilde{T}_1]-[\widetilde{T}_2])=\sum_i([\underline{\rho_1^i}]-[\underline{\rho_2^i}])\cdot ([A_i]-[B_i])\]
	Here, for $\rho\in R(G)$ with $V_\rho$ being the complex vector space underlying the representation, $\underline{\rho}$ means the vector bundle $X\times V_\rho$ with the diagonal $G$-action. By the definition of Grothendieck construction, there exists an equivariant $G$-vector bundle $C$ such that we have the following $G$-vector bundle isomorphism. 
	\[\widetilde{T}_1^{\oplus m}\oplus\bigoplus_i(\underline{\rho_2^i}\otimes A_i\oplus\underline{\rho_1^i}\otimes B_i)\oplus C\cong \widetilde{T}_2^{\oplus m}\oplus \bigoplus_i(\underline{\rho_1^i}\otimes A_i\oplus\underline{\rho_2^i}\otimes B_i)\oplus C.\]
	By \cite[Proposition 2.4]{Se}, there exists an equivariant $G$-vector bundle $D$ such that $\bigoplus_i(\underline{\rho_2^i}\otimes A_i\oplus \underline{\rho_1^i}\otimes B_i)\oplus C\oplus D\cong \underline{\rho_0}$ for some $\rho_0\in R(G)$. Taking the direct sum of both sides with $D$ and forgetting the equivariant structures, we have
	\[V_1^{\oplus pm}\oplus \underline{\mathbb{C}}^{qm+\text{dim}\rho_0}\cong V_2^{\oplus pm}\oplus\underline{\mathbb{C}}^{qm+\text{dim}\rho_0}.\]
	Taking $a=pm$ and $b=qm+\text{dim}\rho_0$ finishes the proof. 
\end{proof}

\begin{proof}[Proof of Theorem \ref{eqfmax}]
Consider the $n$-skeleton $X_n$. It is obtained by gluing the equivariant cells $G/K_i\times\mathbb{D}^n$ for $1\leq i\leq k$ and $K_i$ compact, connected and of maximal rank, to the $(n-1)$-skeleton $X_{n-1}$ through some $G$-equivariant attaching maps. For convenience of exposition and without loss of generality we will consider the case of attaching one equivariant cell $G/K\times\mathbb{D}^n$. Let
\[f: G/K\times\partial\mathbb{D}^n\to X_{n-1}\]
be the equivariant attaching map and 
\[F: G/K\times\mathbb{D}^n\to X_n\] 
be the inclusion of the equivariant cell into $X_n$. We also let $V$ be any given vector bundle over $X_n$. To prove Proposition \ref{eqfmax}, it suffices, by Theorem \ref{rkef&rcef} and the discussion after Definition \ref{rkef}, to show that, for some $p$ and $q$, $V^{\oplus p}\oplus \underline{\mathbb{C}}^{q}$ admits an equivariant structure, assuming by induction hypothesis that $V_0:=V|_{X_{n-1}}$ satisfies the condition that $V_0^{\oplus p_0}\oplus\underline{\mathbb{C}}^{q_0}$ admits an equivariant structure for some $p_0$ and $q_0$. 

Note that $V$ can be obtained by gluing $V_0\to X_{n-1}$ and $W\to G/K_i\times\mathbb{D}^n$, where $W:=F^*V$, through the clutching maps, i.e. vector bundle homomorphism
\[h: W|_{G/K\times\partial\mathbb{D}^n}\to V_0\]
which covers the map $f$ and send fiber to fiber isomorphically. By the discussion in Section \ref{equalranks} and the contractibility of $\mathbb{D}^n$, there exist $r$ and $s$ such that $W^{\oplus r}\oplus\underline{\mathbb{C}}^{s}$ is isomorphic to a certain homogeneous vector bundle which is obviously $G$-equivariant. If we take $p=\text{LCM}(p_0, r)$ and $q=\text{max}\{q_0, s\}$ then both $V_0^{\oplus p}\oplus \underline{\mathbb{C}}^{q}$ and $W^{\oplus p}\oplus\underline{\mathbb{C}}^q$ admit equivariant structures. Consider the clutching map
\[j: W^{\oplus p}|_{G/K\times\partial \mathbb{D}^n}\oplus \underline{\mathbb{C}}^{q}\to V_0^{\oplus p}\oplus\underline{\mathbb{C}}^q\]
built from $h$ for the vector bundles $W^{\oplus p}\oplus \underline{\mathbb{C}}^q$ and $V_0^{\oplus p}\oplus \underline{\mathbb{C}}^q$. The vector bundle $V^{\oplus p}\oplus \underline{\mathbb{C}}^{q}$ admits an equivariant structure if $j$ is homotopy equivalent to another clutching map which is $G$-equivariant. Now we define the map 
\[\alpha: W^{\oplus p}|_{G/K\times\partial \mathbb{D}^n}\oplus\underline{\mathbb{C}}^q\to f^*V_0^{\oplus p}\oplus\underline{\mathbb{C}}^q\]
such that $j$ is the composition of $\alpha$ and the natural map
\begin{align*}
	f^*V_0^{\oplus p}\oplus\underline{\mathbb{C}}^q\cong f^*(V_0^{\oplus p}\oplus\underline{\mathbb{C}}^q)&\to V_0^{\oplus p}\oplus\underline{\mathbb{C}}^q\\
	(x, v)&\mapsto v,\text{ where }f(x)=\pi(v), x\in G/K\times\partial\mathbb{D}^n, v\in V_0^{\oplus p}\oplus\underline{\mathbb{C}}^q.
\end{align*}
The latter map is obviously $G$-equivariant. If the map $\alpha$ is homotopy equivalent to a $G$-equivariant map (and hence so is the clutching map $j$), then $V^{\oplus p}\oplus\underline{\mathbb{C}}^q$, which is obtained by gluing $V_0^{\oplus p}\oplus \underline{\mathbb{C}}^q$ and $W^{\oplus p}\oplus \underline{\mathbb{C}}^q$ through the clutching map, admits the $G$-equivariant structure inherited from those of $V_0^{\oplus p}\oplus \underline{\mathbb{C}}^q$ and $W^{\oplus p}\oplus\underline{\mathbb{C}}^q$. In fact it suffices to show the following 
\begin{claim}
There exist some positive integers $l$ and $m$ such that the map 
\[\alpha^{\oplus m}\oplus\text{Id}_{\underline{\mathbb{C}}^l}: (W^{\oplus p}|_{G/K\times\partial\mathbb{D}^n}\oplus\underline{\mathbb{C}}^q)^{\oplus m}\oplus\underline{\mathbb{C}}^l\to (f^*V_0^{\oplus p}\oplus\underline{\mathbb{C}}^q)^{\oplus m}\oplus\underline{\mathbb{C}}^l\]
is homotopy equivalent to a $G$-equivariant map. 
\end{claim}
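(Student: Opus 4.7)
I would apply Theorem \ref{rkef&rcef} to the overlap space $Y := G/K \times \partial \mathbb{D}^n$, which carries the $G$-action inherited from $G/K$ (trivial on the disk factor). First I verify $Y$ is equivariantly formal: $G/K$ is equivariantly formal by Section \ref{equalranks} since $K$ has maximal rank, and the K\"unneth isomorphism $H_G^*(Y) \cong H_G^*(G/K) \otimes H^*(S^{n-1})$ (valid because $G$ acts trivially on the disk factor) propagates this to $Y$. The bundles $E := (W^{\oplus p}|_Y) \oplus \underline{\mathbb{C}}^q$ and $E' := f^*V_0^{\oplus p} \oplus \underline{\mathbb{C}}^q$ over $Y$ both carry $G$-equivariant structures (inherited from those on $G/K\times\mathbb{D}^n$ and $X_{n-1}$ respectively), and are non-equivariantly isomorphic via $\alpha$. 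Applying Lemma \ref{equiviso} then produces positive integers $a, b$ and a $G$-equivariant isomorphism $\beta: E^{\oplus a} \oplus \underline{\mathbb{C}}^b \to E'^{\oplus a} \oplus \underline{\mathbb{C}}^b$.

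\textbf{Reduction to homotoping an automorphism.} Set $u := \beta^{-1} \circ (\alpha^{\oplus a} \oplus \text{Id}_{\underline{\mathbb{C}}^b})$, a non-equivariant automorphism of $\widetilde{E} := E^{\oplus a} \oplus \underline{\mathbb{C}}^b$. Since $\beta^{\oplus N}$ remains $G$-equivariant for any $N$, it suffices to show that some stabilization $u^{\oplus N} \oplus \text{Id}_{\underline{\mathbb{C}}^L}$ is homotopic, through vector bundle automorphisms, to a $G$-equivariant one. To achieve this I equivariantly embed $\widetilde{E}$ as a summand of a trivial $G$-bundle $Y \times U$ (via Segal's theorem that every equivariant bundle over a compact $G$-space is an equivariant summand of a trivial one), so $u$ extends by identity on the complement to a non-equivariant map $\widehat{u}: Y \to GL(U)$ defining a class $[u] \in K^{-1}(Y)$. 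By Theorem \ref{rkef&rcef}, the forgetful map $f_G \otimes \text{Id}_\mathbb{Q}: K_G^{-1}(Y; \mathbb{Q}) \to K^{-1}(Y; \mathbb{Q})$ is surjective, so for some positive integer $N$ the class $N[u] = [u^{\oplus N}]$ lies in the image of the integral forgetful map, represented by a $G$-equivariant automorphism $v$ of some equivariantly trivial bundle. Since $[u^{\oplus N}] = [v]$ in $K^{-1}(Y)$, after further stabilization by $\underline{\mathbb{C}}^L$ these two become homotopic through non-equivariant automorphisms; conjugating this homotopy by $\beta^{\oplus N}$ then supplies the desired homotopy of $\alpha^{\oplus aN} \oplus \text{Id}_{\underline{\mathbb{C}}^L}$ to a $G$-equivariant map.

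\textbf{Main obstacle.} The delicate point is the translation between homotopy classes of (stabilized) bundle automorphisms and elements of $K^{-1}(Y)$, and the realization of the abstract equivariant $K^{-1}$-lift by an honest equivariant automorphism homotopic to $u^{\oplus N}$ after stabilization. This is standard modulo careful bookkeeping, using the equivariant ``bundle-is-summand-of-trivial'' principle and the usual identification $\pi_0(\mathrm{Aut}(\text{stable trivial bundle})) \cong K^{-1}$, but is the step where the rationality inherent in RKEF must be converted into an honest integral stabilization statement by the additional multiplication-by-$N$ trick.
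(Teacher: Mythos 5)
Your proposal is correct and follows essentially the same route as the paper: equivariant formality of $G/K\times\partial\mathbb{D}^n$ plus Lemma \ref{equiviso} produce the stabilized equivariant isomorphism $\beta$, and the problem is reduced to homotoping a stabilization of the automorphism $u=\beta^{-1}\circ(\alpha^{\oplus a}\oplus\mathrm{Id})$ to an equivariant one via surjectivity of the rational forgetful map. The only (cosmetic) difference is in the last step: where you read off the class of $u$ in $K^{-1}(G/K\times\partial\mathbb{D}^n)$ and invoke the identification of $K^{-1}$ with stable homotopy classes of automorphisms, the paper forms the mapping-torus bundle of $\gamma=\beta\circ(\alpha^{\oplus a}\oplus\mathrm{Id})$ over $G/K\times\partial\mathbb{D}^n\times S^1$ and applies RKEF to that space in degree $0$ --- the two formulations being equivalent under the clutching description of $K^{-1}$, and carrying the same bookkeeping burden you flag as the main obstacle.
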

The claim will imply that $V^{\oplus pm}\oplus\underline{\mathbb{C}}^{qm+l}$ admits an equivariant structure by the above clutching argument. We may then replace $p$ and $q$ with $pm$ and $qm+l$ respectively.

We shall prove the above claim. Note that $\alpha$ is a vector bundle isomorphism as it covers the identity map on $G/K\times\partial\mathbb{D}^n$ and send fiber to fiber isomorphically. Bearing in mind that $G/K$ is an equivariantly formal $G$-space (cf. Section \ref{equalranks}) and so is $\partial\mathbb{D}^n$ due to the trivial $G$-action, $G/K\times\partial\mathbb{D}^n$ is an equivariant formal $G$-space because it is a product of equivariant formal $G$-spaces. By Lemma \ref{equiviso}, there exist positive integers $a$ and $b$ and equivariant $G$-vector bundle isomorphism
\[\beta: (f^*V_0^{\oplus p}\oplus \underline{\mathbb{C}}^q)^{\oplus a}\oplus\underline{\mathbb{C}}^b\to (W^{\oplus p}|_{G/K\times\partial\mathbb{D}^n}\oplus\underline{\mathbb{C}}^q)^{\oplus a}\oplus\underline{\mathbb{C}}^b.\]
The composition $\gamma:=\beta\circ (\alpha^{\oplus a}\oplus \text{Id}_{\underline{\mathbb{C}}^b})$ then is a vector bundle automorphism of $U:=W^{\oplus pa}|_{G/K\times\partial\mathbb{D}^n}\oplus\underline{\mathbb{C}}^{qa+b}$. Let $Y$ be the vector bundle $U\times[0, 1]/((u, 0)\sim(\gamma(u), 1))$ over $G/K\times\partial\mathbb{D}^n\times S^1$, which is an equivariantly formal $G$-space by the above argument. 
By Theorem \ref{rkef&rcef}, $G/K\times\partial\mathbb{D}^n\times S^1$ is RKEF. It follows that for some positive integers $c$ and $d$,  $Y^{\oplus c}\oplus\underline{\mathbb{C}}^d$ can be made an equivariant $G$-vector bundle, and thus $\gamma^{\oplus c}\oplus \text{Id}_{\underline{\mathbb{C}}^d}$ is homotopy equivalent to some $G$-equivariant clutching map $\delta: U^{\oplus c}\oplus\text{Id}_{\underline{\mathbb{C}}^d}\to U^{\oplus c}\oplus\text{Id}_{\underline{\mathbb{C}}^d}$. It follows that $\alpha^{\oplus ac}\oplus\text{Id}_{\underline{\mathbb{C}}^{bc+d}}=((\beta)^{-1})^{\oplus c}\oplus\text{Id}_{\underline{\mathbb{C}}^d})\circ(\gamma^{\oplus c}\oplus\text{Id}_{\underline{\mathbb{C}}^d})$ is homotopy equivalent to the equivariant $G$-vector bundle isomorphism $((\beta^{-1})^{\oplus c}\oplus\text{Id}_{\underline{\mathbb{C}}^d})\circ\delta$. Now taking $m=ac$ and $l=bc+d$ finishes the proof of the claim. 

We have shown that, by induction on the dimension of $X$, for any given vector bundle $V\to X$, $V^{\oplus p}\oplus \underline{\mathbb{C}}^q$ admits an equivariant structure for some $p$ and $q$. The same is true for the suspension $\Sigma X$ because it is also a $G$-CW complex with maximal rank connected isotropy subgroups. It follows that the $G$-action on $X$ is equivariantly formal by Theorem \ref{rkef&rcef}. 
\end{proof}

\noindent\footnotesize{\textsc{National Center for Theoretical Sciences,
Mathematics Division, National Taiwan University, Taipei 10617, Taiwan\\
\\
School of Mathematical Sciences, the University of Adelaide, Adelaide, SA 5005, Australia}\\
\\
\textsc{E-mail}: \texttt{chi-kwong.fok@adelaide.edu.au}\\
\textsc{URL}: \texttt{https://sites.google.com/site/alexckfok}

\end{document}